\documentclass[journal,twoside,web]{ieeecolor}
\usepackage{lcsys}
\usepackage{amsmath,amssymb,amsfonts}
\usepackage{algpseudocode}
\usepackage{algorithm}
\usepackage{graphicx}
\usepackage{graphics}
\usepackage{textcomp}
\usepackage{subcaption}
\usepackage{cleveref}
\usepackage[font=small,skip=0pt]{caption}

\usepackage[backend=bibtex]{biblatex}
\AtNextBibliography{\small}

\def\BibTeX{{\rm B\kern-.05em{\sc i\kern-.025em b}\kern-.08em
    T\kern-.1667em\lower.7ex\hbox{E}\kern-.125emX}}
\newtheorem{theorem}{Theorem}
\newtheorem{lemma}{Lemma}    

\newtheorem{remark}{Remark}

\newtheorem{assumption}{Assumption}

\newcommand{\E}{\mathbb{E}}

\newcommand{\blue}[1]{{\color{black} \noindent #1}}

\begin{document}
\title{
Model Predictive Path Integral Control as Preconditioned Gradient Descent
}
\author{Mahyar Fazlyab, Sina Sharifi, Jiarui Wang
\thanks{The authors are with the Department of Electrical and Computer Engineering at Johns Hopkins University, Baltimore, MD 21218, USA.
{\tt\small \{mahyarfazlyab, sshari12, jwang486\}@jhu.edu}}
}

\maketitle
\thispagestyle{empty}

\begin{abstract}
\blue{Model Predictive Path Integral (MPPI) control is a widely used sampling-based method for trajectory optimization, yet its convergence properties remain only partially understood. This paper provides a direct convergence analysis using variational optimization. By lifting constrained trajectory optimization to a Kullback-Leibler (KL) regularized problem over decision distributions, we derive a reduced free-energy objective defined over a parametric sampling family. For general parametric families, we derive gradient and Hessian representations of this reduced objective and analyze preconditioned gradient descent on the sampling-distribution parameters. In the fixed-covariance Gaussian case, the classical MPPI update is recovered exactly as a unit-step preconditioned gradient update. We prove descent and stationarity guarantees for the exact expectation-based iteration when the Hessian of the reduced objective is bounded in the metric induced by the preconditioner. 
For the Gaussian family, we further show that the preconditioned Hessian is governed by the covariance of the Gibbs-tilted distribution relative to the covariance of the sampling distribution, yielding a covariance-dependent sufficient condition for the descent of exact unit-step MPPI. Numerical experiments illustrate the theory and the effect of key hyperparameters.
}
\end{abstract}

\begin{IEEEkeywords}
Optimal control, Optimization Algorithms, Predictive control for nonlinear systems
\end{IEEEkeywords}

\section{Introduction}
Model Predictive Path Integral (MPPI) control, e.g., \cite{williams2015model, williams2017information}, is a widely used sampling-based method for trajectory optimization in nonlinear and nonconvex settings, owing to its simplicity, parallelizability, and ability to handle nondifferentiable dynamics and costs. In its standard form, MPPI updates the sampling distribution by drawing perturbed control sequences, reweighting them according to their trajectory costs, and shifting the nominal control toward a weighted average of the sampled rollouts. Despite its empirical success in robotics and real-time control, this update is typically introduced through stochastic optimal control or control-as-inference arguments, which do not directly expose its underlying optimization structure. As a result, basic questions such as how MPPI relates to gradient-based methods, when its update is guaranteed to decrease a well-defined objective, and how its hyperparameters influence stability and convergence remain only partially understood~\cite{honda2025model}. These gaps motivate the need for a direct optimization-theoretic interpretation of MPPI.

\subsection{Contributions}
This paper provides a variational, optimization-theoretic analysis of MPPI, with the goal of establishing convergence guarantees beyond special cases. Starting from constrained trajectory optimization, we lift the problem to a KL-regularized distributional formulation and eliminate the auxiliary decision distribution to obtain a reduced negative log-partition, or free-energy, objective over a tractable sampling family. For a general parametric sampling family, we derive exact gradient and Hessian formulas for this reduced objective, which allow us to analyze the convergence of preconditioned gradient descent on the sampling-distribution parameters. Our framework enables three concrete consequences. \textit{First}, it yields descent and stationarity guarantees, including an \(O(1/K)\) ergodic stationarity rate, for the exact preconditioned-gradient iteration when the Hessian of the reduced objective is bounded in the metric induced by the preconditioner. \textit{Second}, in the fixed-covariance Gaussian family, it recovers classical MPPI exactly as a unit-step preconditioned gradient update and shows that the preconditioned Hessian of the reduced objective 
is governed by the covariance of the Gibbs-tilted distribution relative to the sampling covariance. This leads to an explicit covariance-dependent sufficient condition for descent of exact unit-step MPPI. \textit{Third}, it provides a principled basis for selecting the algorithm hyperparameters, including step size, multiple inner updates, and stopping criteria based on stationarity. Numerical experiments support the theory and illustrate the effect of key hyperparameters on performance.

\subsection{Related Work}
\subsubsection*{Probabilistic Inference Perspective} Inference-based formulations recast control as posterior inference over action sequences conditioned on an optimality variable, leading to updates closely related to MPPI \cite{honda2025model}. This viewpoint has been developed extensively in reinforcement learning and control \cite{levine2018reinforcement}. 
In particular, \cite{okada2020variational} introduced a variational inference MPC framework that recovers several sampling-based optimization methods, including MPPI \cite{williams2016aggressive}, CEM \cite{botev2013cross}, and CMA-ES \cite{hansen2003reducing} as special cases. Our contribution is complementary: rather than deriving MPPI through inference, we show that it can be obtained directly as a preconditioned gradient step on a KL-regularized free-energy objective.

\subsubsection*{Diffusion Perspective}
Another line of work connects MPPI to model-based diffusion \cite{pan2024model, xue2025full, jung2025joint}. 
In \cite{xue2025full}, building on the score estimation result from \cite{pan2024model} that Mscore-estimation result from \cite{pan2024model}, it is shown on a Gaussian-smoothed Gibbs distribution. Although this interpretation explains the mechanism of MPPI, it still does not directly reveal its convergence properties.

\subsubsection*{Optimization Perspective}
\blue{MPPI has also been studied through optimization-based perspectives, particularly mirror descent (MD)~\cite{miyashita2018mirror,wagener2019online} and its accelerated variants~\cite{okada2018acceleration}. These methods perform distribution-space updates that are then restricted or projected onto tractable parametric families; for Gaussian families, this recovers standard MPPI. 
Closest to our work, Wagener et al.~\cite{wagener2019online} considered utility-transformed trajectory objectives and showed that the exponential-utility case yields classical MPPI under a fixed-covariance Gaussian family with unit step size. In contrast, our free-energy objective arises by exactly eliminating the decision distribution in a KL-regularized variational formulation of the original constrained trajectory optimization problem.
}

\subsubsection*{Theoretical Analysis of MPPI}
Motivated by the empirical success of MPPI, several recent works have begun to study its theoretical properties. In particular, CoVO-MPC\cite{yi2024covo} analyzes the convergence behavior of MPPI using contraction theory, proving at least linear convergence for (time-varying) LQR. However, the contraction result cannot be extended to general nonlinear settings without making extra regularity assumptions. Separately, \cite{homburger2025optimality} studies the optimality and suboptimality of MPPI in stochastic and deterministic settings, with an emphasis on deterministic MPPI and its approximation error. Our analysis is complementary to these works: we analyze the convergence for general nonlinear systems and cost, with bounded feasible set being the main requirement.

\subsection{Notation}
For a symmetric matrix \(A\), \(A\succeq0\) and \(A\succ0\) denote positive semidefiniteness and positive definiteness. The identity matrix is \(I\), and \(\lambda_{\min}(A)\), \(\lambda_{\max}(A)\) denote the extreme eigenvalues of \(A\). We use \(\|\cdot\|\) for both the Euclidean and spectral norms. For \(P\succ0\), let \(\|x\|_P^2=x^\top Px\). For a density \(\pi\), \(\mathbb E_\pi[\cdot]\), \(\mathrm{Cov}_\pi(\cdot)\), and \(\mathrm{supp}(\pi)\) denote expectation, covariance, and support. We write \(\pi(u)=\mathcal N(u;\mu,\Sigma)\) for a Gaussian density and \(\mathrm{KL}(\rho\|\pi)\) for the Kullback--Leibler divergence. The notation \(\rho\ll\pi\) means that \(\rho\) is absolutely continuous with respect to \(\pi\). For a differentiable function \(F\), \(\nabla F\) and \(\nabla^2F\) denote its gradient and Hessian. For a set \(C\), \(\mathbf 1_C\) denotes its indicator function.

\section{Variational Formulation}

\subsection{Trajectory Optimization as Constrained Minimization}
We consider finite-horizon trajectory optimization over an open-loop control sequence $u := (u_0,u_1,\dots,u_{T-1}) \in \mathbb{R}^{dT}$ applied to a dynamical system
\[
x_{t+1} = F(x_t,u_t),
\]
possibly nonlinear and nonsmooth, from a given initial condition \(x_0\). Let \(f_0(u)\) denote the trajectory objective (e.g., cumulative stage costs and a terminal cost), and \(C\subset\mathbb R^{dT}\) denote the set of feasible control sequences, encoding constraints such as obstacle avoidance, state bounds, or input limits. Throughout, we assume that \(C\) is nonempty and compact and that \(f_0\) is continuous. The resulting trajectory optimization problem is
\begin{align}\label{eq:det_to}
    \min_{u\in C} f_0(u).
\end{align}

\subsection{KL-Regularized Distributional Formulation}
\blue{Following the framework of variational optimization, e.g., ~\cite{staines2013optimization}, we first lift the pointwise trajectory optimization problem \eqref{eq:det_to} to an optimization problem over probability distributions on the open-loop control sequence \(u\):
\[
\min_{\rho}
\mathbb E_\rho[f_0(u)] \quad \mathrm{s.t.} \ \mathrm{supp}(\rho)\subseteq C
\]
Here, \(\rho\) denotes a \emph{decision distribution} over \(u\). This unregularized lifted problem is equivalent to the original pointwise problem and collapses to a Dirac measure at a minimizer of \(f_0\) over \(C\). To obtain a nondegenerate distributional formulation, we introduce a \emph{base}, or sampling, distribution \(\pi\) over the same control-sequence space and regularize \(\rho\) relative to \(\pi\) using the KL divergence. We require \(\rho\ll\pi\); otherwise, \(\mathrm{KL}(\rho\|\pi)=+\infty\). For a regularization parameter \(\tau>0\), consider
\begin{equation}
\label{eq:dist_primal}
\min_{\rho} \ 
\mathbb{E}_{\rho}[f_0(u)] 
+ 
\tau \, \mathrm{KL}(\rho\|\pi)
\quad 
\text{s.t.} 
\quad 
\mathrm{supp}(\rho) \subseteq C .
\end{equation}
The support constraint enforces the hard feasibility of the decision distribution. Problem~\eqref{eq:dist_primal} trades off low expected trajectory cost under \(\rho\) with proximity to the sampling distribution \(\pi\). As \(\tau\to 0\), the regularization vanishes and optimal solutions concentrate on the optimal set
\(
U^\star=\arg\min_{u\in C} f_0(u).
\)
}

\subsection{Optimizing the Base Distribution}
For any fixed base distribution $\pi$, \eqref{eq:dist_primal} provides an upper bound on the optimal value of the original constrained problem:
\[
\min_{\rho}
\Big(
\E_\rho[f_0(u)] + \tau \mathrm{KL}(\rho\|\pi)
\Big)
\ge
\min_{\rho}\E_\rho[f_0(u)]
=
\min_{u\in C} f_0(u),
\]
where both minimizations are taken over distributions \(\rho\) supported on \(C\), and the equality follows by choosing \(\rho\) as a Dirac measure at any minimizer of \(f_0\) over \(C\). \blue{Note that the upper bound is a function of $\pi$.} Therefore, we can optimize over \(\pi\) to seek the tightest such upper bound. However, if we optimize over \(\pi\) without restriction, the pair $(\rho,\pi)$ may collapse (e.g., $\pi=\rho$), undermining stability and exploration. We therefore restrict $\pi$ to a tractable family $\Pi$ (e.g., Gaussians with bounded covariance), and consider
\begin{equation}
\label{eq:dist_joint}
\min_{\pi \in \Pi}\min_{\rho} \ \mathbb{E}_{\rho}[f_0(u)] + \tau \, \mathrm{KL}(\rho\|\pi)
\quad \text{s.t.} \quad \mathrm{supp}(\rho) \subseteq C.
\end{equation}
For a fixed $\pi\in\Pi$, the minimizer over $\rho$ in \eqref{eq:dist_joint} is given by the truncated Gibbs tilt
\begin{align}
\label{eq:rho_star}
\rho^\star_{\pi}(u)
&=T(\pi)(u):=
\frac{\pi(u)\exp\!\big(-f_0(u)/\tau\big)\,\mathbf{1}_{C}(u)}
{Z(\pi)},
\end{align}
where $Z(\pi)$ is the normalizing constant
\begin{align}\label{eq:partion_function}
Z(\pi)
&:=
\int_{C} \pi(v)\exp\!\big(-f_0(v)/\tau\big)\,dv.
\end{align}
See the Appendix for a full derivation. \blue{Since $\pi$ is positive on $C$, we have $Z(\pi)>0$}.
Thus, \(\rho^\star_\pi\) is obtained by reweighting the base distribution \(\pi\) according to trajectory cost and feasibility: lower-cost feasible control sequences receive larger probability mass, whereas infeasible sequences receive zero mass.
\blue{Substituting \eqref{eq:rho_star} into the inner objective yields
\begin{align}
&
\mathbb{E}_{\rho^\star_\pi}[f_0(u)]
+
\tau\,\mathrm{KL}(\rho^\star_\pi\|\pi)
\nonumber\\
&\quad=
\mathbb{E}_{\rho^\star_\pi}[f_0(u)]
+
\tau\,
\mathbb{E}_{\rho^\star_\pi}
\left[
-\frac{f_0(u)}{\tau}
-
\log Z(\pi)
\right]
\nonumber\\
&\quad=
-\tau\log Z(\pi).
\end{align}}
Therefore, the joint optimization problem \eqref{eq:dist_joint} reduces to the finite-dimensional optimization problem over the negative log-partition or free-energy objective,
\begin{equation}
\label{eq:reduced}
\min_{\pi\in\Pi} -\tau \log Z(\pi).
\end{equation}
This objective is precisely the \(\pi\)-dependent upper bound obtained after eliminating the auxiliary distribution \(\rho\).

\section{Optimization over a Parametric Sampling Family}
\label{sec:parametric_family}

In this section, we specialize the reduced problem \eqref{eq:reduced} to a parametric family of sampling distributions $\Pi := \{\pi_\theta : \theta\in\Theta\}$, where \(\Theta\subseteq\mathbb{R}^p\) is the parameter space. For each \(\theta\in\Theta\), the corresponding optimal decision distribution is
\begin{align}
\label{eq:rho_theta}
\rho_\theta(u) := T(\pi_\theta)(u).
\end{align}
Accordingly, the reduced problem becomes
\begin{align}
\label{eq:reduced_theta}
\min_{\theta\in\Theta}\; F(\theta) \! := \! -\tau \log Z(\theta) \! = \! -\tau
\log \int_C \pi_\theta(u)e^{-\tfrac{f_0(u)}{\tau}}\,du.
\end{align}
We make the following assumption, under which the reduced objective \(F\) becomes twice differentiable.

\begin{assumption}
\label{ass:regularity_parametric}
The family \(\{\pi_\theta\}_{\theta\in\Theta}\) is strictly positive on \(C\), twice continuously differentiable in \(\theta\), and such that differentiation under the integral sign is valid for \(Z(\theta)\) up to second order.
\end{assumption}

\subsection{Preconditioned Gradient Descent}
\label{sec:pgd_mppi}

In contrast to the original constrained trajectory optimization problem, the reduced problem \eqref{eq:reduced_theta} is differentiable in the distribution parameters and is therefore amenable to gradient-based optimization. The following result provides expressions for the gradient and Hessian of \(F(\theta)\) that will be useful for algorithm design and convergence analysis.

\begin{lemma}[Gradient and Hessian Representations]
\label{lem:gradient-representations}
Under Assumption~\ref{ass:regularity_parametric},
\begin{align}
\nabla F(\theta)
&=
-\tau \mathbb{E}_{\rho_\theta}\!\left[\nabla_\theta \log \pi_\theta(u)\right]
\label{eq:grad_F_theta} \\
&=
-\tau \frac{\mathbb{E}_{\pi_\theta}\!\left[w(u)\nabla_\theta \log \pi_\theta(u)\right]}
{\mathbb{E}_{\pi_\theta}\!\left[w(u)\right]},
\label{eq:grad_F_theta_2}
\end{align}
where
\begin{align}\label{eq:weight_sample}
w(u):=\exp\!\big(-f_0(u)/\tau\big)\mathbf{1}_C(u).    
\end{align}
In addition,
\begin{align}
\nabla^2 F(\theta)
&=
-\tau
\Big(
\mathbb{E}_{\rho_\theta}\!\left[\nabla_\theta^2 \log \pi_\theta(u)\right]
+
\operatorname{Cov}_{\rho_\theta}\!\left(\nabla_\theta \log \pi_\theta(u)\right)
\Big).
\label{eq:hessian_general}
\end{align}
\end{lemma}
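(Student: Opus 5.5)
The plan is to differentiate $F(\theta)=-\tau\log Z(\theta)$ directly, using Assumption~\ref{ass:regularity_parametric} to pass derivatives inside the integral defining $Z(\theta)$. The log-derivative trick, $\nabla_\theta\pi_\theta=\pi_\theta\nabla_\theta\log\pi_\theta$, then turns every resulting integral into an expectation under either $\pi_\theta$ or $\rho_\theta$. Strict positivity of $\pi_\theta$ on $C$, together with nonemptiness of $C$ and continuity of $f_0$ on compact $C$, gives $Z(\theta)>0$. Hence $\log Z$ and the score $\nabla_\theta\log\pi_\theta(u)$ are well defined wherever the integrand is nonzero.

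\textbf{Gradient.} Write $Z(\theta)=\int \pi_\theta(u)w(u)\,du=\mathbb{E}_{\pi_\theta}[w(u)]$, with $w$ as in \eqref{eq:weight_sample}. Differentiating under the integral gives
\[
\nabla Z(\theta)=\int w(u)\pi_\theta(u)\nabla_\theta\log\pi_\theta(u)\,du=\mathbb{E}_{\pi_\theta}\!\left[w(u)\nabla_\theta\log\pi_\theta(u)\right].
\]
Then $\nabla F=-\tau\nabla Z/Z$, which is exactly \eqref{eq:grad_F_theta_2}. Since $\rho_\theta(u)=\pi_\theta(u)w(u)/Z(\theta)$ by \eqref{eq:rho_star}, dividing by $Z$ converts the $\pi_\theta$-expectation weighted by $w$ into a $\rho_\theta$-expectation, which gives \eqref{eq:grad_F_theta}.

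\textbf{Hessian.} Use $\nabla^2\log Z=\nabla^2 Z/Z-(\nabla Z)(\nabla Z)^\top/Z^2$. Differentiating $\nabla Z$ once more under the integral, with the product rule on $\pi_\theta\nabla_\theta\log\pi_\theta$, yields
\[
\nabla^2 Z=\int w\,\pi_\theta\left(\nabla^2_\theta\log\pi_\theta+\nabla_\theta\log\pi_\theta\,\nabla_\theta\log\pi_\theta^\top\right)du .
\]
Dividing by $Z$ turns this into $\mathbb{E}_{\rho_\theta}[\nabla^2_\theta\log\pi_\theta]+\mathbb{E}_{\rho_\theta}[ss^\top]$, where $s=\nabla_\theta\log\pi_\theta$. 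The subtracted term $(\nabla Z/Z)(\nabla Z/Z)^\top$ equals $\mathbb{E}_{\rho_\theta}[s]\,\mathbb{E}_{\rho_\theta}[s]^\top$ by the gradient step. The second moment minus the outer product of means is $\operatorname{Cov}_{\rho_\theta}(s)$. Multiplying by $-\tau$ gives \eqref{eq:hessian_general}.

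\textbf{Main obstacle.} The only delicate point is justifying the interchanges of differentiation and integration, and the identity $\nabla^2\pi_\theta=\pi_\theta(\nabla^2\log\pi_\theta+ss^\top)$, on the region where $\pi_\theta>0$. The interchanges are granted by Assumption~\ref{ass:regularity_parametric}. The identity holds on $C$ by strict positivity, and $w$ vanishes off $C$, so integrands outside $C$ contribute nothing. With these in place, the remaining computations are routine algebra.
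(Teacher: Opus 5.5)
Your proposal is correct and matches the paper's proof essentially step for step. The paper likewise differentiates $Z(\theta)$ under the integral using the score identity, and its quotient-rule computation on $A/B$, with $A=\mathbb{E}_{\pi_\theta}[w\,\nabla_\theta\log\pi_\theta]=\nabla Z$, $B=Z$ and $\nabla B=A$, is exactly your identity $\nabla^2\log Z=\nabla^2 Z/Z-\nabla Z\,\nabla Z^\top/Z^2$. One minor caveat, which the paper shares: $Z(\theta)>0$ requires $C$ to have positive Lebesgue measure, and nonemptiness plus compactness alone do not guarantee this.
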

\begin{proof}
See Appendix \ref{appendix:proof}.
\end{proof}
The gradient representations in Lemma~\ref{lem:gradient-representations} naturally motivate a preconditioned gradient method for minimizing the reduced objective $F(\theta)$.
 Given a symmetric positive definite preconditioner $P \succ 0$ and a step size $\eta>0$, the exact preconditioned gradient descent is
\begin{align}
\label{eq:pgd_exact}
\theta_{k+1}
&=
\theta_k-\eta P \nabla F(\theta_k) \\
&=
\theta_k+\eta \tau P\,
\mathbb{E}_{\rho_{\theta_k}}
\!\left[\nabla_\theta \log \pi_{\theta_k}(u)\right].
\nonumber
\end{align} 
\begin{algorithm}[t]
\caption{Multi-step MPPI}
\label{alg:GD-MPPI}
\begin{algorithmic}[1]
    \Require Initial parameter $\theta_0$, number of samples $N$, number of iterations $K$
    \State Set $k = 1.$
    \While{Convergence condition not met.}
        \State Sample $u^{(1)}, ..., u^{(N)} \overset{\text{i.i.d.}}{\sim} \pi_{\theta_{k-1}}(u)$
        \State Approximate $\nabla_\theta F(\theta_{k})$ according to \eqref{eq:pgd_weights}.
        \State Update $\theta_{k}$ according to \eqref{eq:pgd_sampled}.
        \State $k \gets k + 1$
    \EndWhile
\end{algorithmic}
\end{algorithm}
In practice, the expectation with respect to $\rho_{\theta_k}$ is generally intractable. Using \eqref{eq:grad_F_theta_2}, we can sample from   $\pi_{\theta_k}$ instead. Specifically, we draw samples $u^{(j)}\sim \pi_{\theta_k}, \  j=1,\dots,N$, 
and define the self-normalized importance weights
\begin{align}
\label{eq:pgd_weights}
\bar w_j:=\frac{w(u^{(j)})}{\sum_{r=1}^N w(u^{(r)})} \quad j=1,\cdots,N.
\end{align}
This yields the self-normalized Monte Carlo estimator of \eqref{eq:pgd_exact},
\begin{align}
\label{eq:pgd_sampled}
\theta_{k+1}
&=
\theta_k+\eta \tau P
\sum_{j=1}^N \bar w_j\,
\nabla_\theta \log \pi_{\theta_k}(u^{(j)}).
\end{align}
This update has the standard structure of a weighted sample average used in policy search and sampling-based control methods, and recovers MPPI as a special case under a suitable Gaussian parameterization.

\subsection{Convergence Analysis}
\label{sec:convergence}

We now analyze the exact preconditioned gradient iteration \eqref{eq:pgd_exact} with a constant step size \(\eta>0\), and derive conditions under which it yields descent and convergence of the reduced objective. Since the iteration is preconditioned by \(P\), the relevant notion of smoothness is naturally expressed in the metric induced by \(P\).

\begin{assumption}
\label{ass:hessian_bound}
For the chosen positive definite matrix \(P\succ 0\), there exists a constant \(L_P>0\) such that
\[
\sup_{\theta\in\Theta}
\left\|
P^{1/2}\nabla^2 F(\theta)P^{1/2}
\right\|
\le L_P.
\]
\end{assumption}

The following lemma is an immediate consequence of Assumption~\ref{ass:hessian_bound}.

\begin{lemma}
\label{lemma:LP_smooth_from_hessian}
Under Assumption~\ref{ass:hessian_bound}, for all \(\theta,\theta+\Delta\theta\in\Theta\),
\begin{equation}
\label{eq:LP_smooth}
F(\theta+\Delta\theta)
\le
F(\theta)
+
\nabla F(\theta)^\top \Delta\theta
+
\frac{L_P}{2}\Delta\theta^\top P^{-1}\Delta\theta.
\end{equation}
\end{lemma}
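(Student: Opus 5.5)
The plan is to use Taylor's theorem with integral remainder along the segment joining $\theta$ and $\theta+\Delta\theta$, and then to convert the Hessian bound of Assumption~\ref{ass:hessian_bound} into a bound on the quadratic form in the $P^{-1}$ metric. For this to make sense, the segment $\{\theta+s\Delta\theta: s\in[0,1]\}$ must lie in $\Theta$. I will assume $\Theta$ is convex, as it is for the Gaussian mean parameterization where $\Theta=\mathbb R^p$, or I will restrict the statement to such segments. Under Assumption~\ref{ass:regularity_parametric}, $F$ is twice continuously differentiable along this segment. Set $g(s):=F(\theta+s\Delta\theta)$. Then $g'(s)=\nabla F(\theta+s\Delta\theta)^\top\Delta\theta$ and $g''(s)=\Delta\theta^\top\nabla^2F(\theta+s\Delta\theta)\Delta\theta$. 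Taylor's formula with integral remainder gives
\begin{equation*}
F(\theta+\Delta\theta)=F(\theta)+\nabla F(\theta)^\top\Delta\theta+\int_0^1(1-s)\,\Delta\theta^\top\nabla^2F(\theta+s\Delta\theta)\Delta\theta\,ds .
\end{equation*}

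The key step is bounding the integrand. Write $\Delta\theta=P^{1/2}z$ with $z:=P^{-1/2}\Delta\theta$. Then $\Delta\theta^\top\nabla^2F(\vartheta)\Delta\theta=z^\top\big(P^{1/2}\nabla^2F(\vartheta)P^{1/2}\big)z$. Since $P^{1/2}\nabla^2F(\vartheta)P^{1/2}$ is symmetric, this quantity is at most $\|P^{1/2}\nabla^2F(\vartheta)P^{1/2}\|\,\|z\|^2$, which by Assumption~\ref{ass:hessian_bound} is at most $L_P\,\|z\|^2$. Finally, $\|z\|^2=\Delta\theta^\top P^{-1}\Delta\theta$. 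This bound holds uniformly in $s$ because the supremum in the assumption is taken over all of $\Theta$.

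Substituting into the remainder and using $\int_0^1(1-s)\,ds=1/2$ gives exactly \eqref{eq:LP_smooth}. I expect no real obstacle. The only delicate points are the convexity of $\Theta$, needed so the segment stays in the domain, and the continuity of $\nabla^2F$, needed to justify the integral form of the remainder. The latter follows from Assumption~\ref{ass:regularity_parametric} together with \eqref{eq:hessian_general}. If continuity were in doubt, I would instead use the mean-value (Lagrange) form of the remainder, $\tfrac12 g''(\xi)$ for some $\xi\in(0,1)$, which only requires $g$ to be twice differentiable, and the same bound applies.
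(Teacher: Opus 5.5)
Your proposal is correct and follows the paper's proof essentially verbatim: Taylor's theorem with integral remainder, the rewriting $\Delta\theta^\top \nabla^2 F\, \Delta\theta = z^\top (P^{1/2}\nabla^2 F P^{1/2}) z$ with $z = P^{-1/2}\Delta\theta$, the operator-norm bound by $L_P\,\Delta\theta^\top P^{-1}\Delta\theta$, and $\int_0^1(1-t)\,dt = 1/2$. Your requirement that the segment from $\theta$ to $\theta+\Delta\theta$ lie in $\Theta$ (e.g., $\Theta$ convex) is something the paper assumes without stating, so making it explicit improves the rigor.
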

\begin{proof}
    See Appendix \ref{appendix:proof}.
\end{proof}

\begin{theorem}
\label{thm:pgd_general}
Suppose Assumption~\ref{ass:hessian_bound} holds, and let \(\{\theta^k\}\) be generated by \eqref{eq:pgd_exact} with a constant step size \(\eta>0\). If
\begin{equation}
\label{eq:eta_condition_general}
0<\eta<\frac{2}{L_P},
\end{equation}
then the following hold:
\begin{enumerate}
\item \textbf{Descent:} for every \(k\),
\begin{equation}
\label{eq:descent_general}
F(\theta^{k+1})
\le
F(\theta^k)
-
\eta\Big(1-\frac{\eta L_P}{2}\Big)
\|\nabla F(\theta^k)\|_{P}^2.
\end{equation}

\item \textbf{Summability of preconditioned gradients:}
\begin{equation}
\label{eq:summability_general}
\sum_{k=0}^{\infty}\|\nabla F(\theta^k)\|_{P}^2
\le
\frac{F(\theta^0)-\inf_{\theta\in\Theta}F(\theta)}
{\eta\big(1-\frac{\eta L_P}{2}\big)}.
\end{equation}

\item \textbf{Stationarity:} for every \(K\ge 1\),
\begin{equation}
\label{eq:min_stationarity_general}
\min_{0\le j\le K-1}\|\nabla F(\theta^j)\|_P^2
\le
\frac{F(\theta^0)-\inf_{\theta\in\Theta}F(\theta)}
{K\,\eta\big(1-\frac{\eta L_P}{2}\big)}.
\end{equation}
In particular, $\lim_{k \to \infty} \|\nabla F(\theta^k)\|_P=0$. 
\end{enumerate}
\end{theorem}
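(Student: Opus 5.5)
The plan is the standard descent-lemma argument, carried out in the metric induced by \(P\). I will apply Lemma~\ref{lemma:LP_smooth_from_hessian} with \(\theta=\theta^k\) and \(\Delta\theta=-\eta P\nabla F(\theta^k)\). This requires \(\theta^{k+1}\in\Theta\), which I take as implicit in the statement that the iterates are generated by \eqref{eq:pgd_exact} in \(\Theta\); it is automatic if, e.g., \(\Theta=\mathbb R^p\). The linear term is then
\[
\nabla F(\theta^k)^\top\Delta\theta=-\eta\,\nabla F(\theta^k)^\top P\nabla F(\theta^k)=-\eta\|\nabla F(\theta^k)\|_P^2 .
\]
The quadratic term is
\[
\tfrac{L_P}{2}\eta^2\,\nabla F(\theta^k)^\top P P^{-1}P\nabla F(\theta^k)=\tfrac{L_P\eta^2}{2}\|\nabla F(\theta^k)\|_P^2 .
\]
Adding the two gives \eqref{eq:descent_general}. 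When \(0<\eta<2/L_P\), the coefficient \(c:=\eta(1-\eta L_P/2)\) is strictly positive, so \(F(\theta^k)\) is nonincreasing.

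For summability, I rearrange the descent inequality to \(c\|\nabla F(\theta^j)\|_P^2\le F(\theta^j)-F(\theta^{j+1})\) and sum over \(j=0,\dots,K-1\). The right side telescopes to \(F(\theta^0)-F(\theta^K)\le F(\theta^0)-\inf_\Theta F\). I need \(\inf_\Theta F>-\infty\), which follows from the setup: \(Z(\theta)=\int_C\pi_\theta e^{-f_0/\tau}\le e^{-\min_C f_0/\tau}\) because \(\pi_\theta\) is a probability density and \(f_0\) is continuous on the compact set \(C\). Hence \(F(\theta)\ge\min_C f_0\), which is also the upper-bound interpretation from Section~II. Dividing by \(c\) and letting \(K\to\infty\), the partial sums are monotone and bounded, which gives \eqref{eq:summability_general}.

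For stationarity, I bound the minimum over the first \(K\) terms by their average, using the same finite-\(K\) telescoped bound; this gives \eqref{eq:min_stationarity_general}. Since the series in \eqref{eq:summability_general} converges, its terms tend to zero, so \(\|\nabla F(\theta^k)\|_P\to0\).

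None of these steps is technically hard. The only points needing care are the lower bound on \(F\), handled above, and making sure the quadratic term collapses correctly to \(\|\cdot\|_P^2\) through \(PP^{-1}P=P\).
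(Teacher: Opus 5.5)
Your proposal is correct and follows the paper's own route. You apply Lemma~\ref{lemma:LP_smooth_from_hessian} with $\Delta\theta=-\eta P\nabla F(\theta^k)$, telescope the descent inequality, bound the minimum by the average, and use summability to get $\|\nabla F(\theta^k)\|_P\to 0$. Your explicit checks that $\inf_\Theta F\ge \min_C f_0>-\infty$ and that the iterates stay in $\Theta$ make precise points the paper leaves implicit; the lower bound is what makes the limit conclusion non-vacuous.
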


\begin{proof}
Applying the smoothness bound \eqref{eq:LP_smooth} with $\Delta \theta=-\eta P\nabla F (\theta^k)$, we obtain \eqref{eq:descent_general} after simplification.  Since \(0<\eta<2/L_P\), the coefficient is positive, so \(F(\theta^k)\) is nonincreasing. Summing \eqref{eq:descent_general} from \(k=0\) to \(K-1\) and using $\inf_{\theta\in\Theta}F(\theta) \leq F(\theta^K)$
\[
\eta\Big(1-\frac{\eta L_P}{2}\Big)
\sum_{k=0}^{K-1}\|\nabla F(\theta^k)\|_P^2
\le
F(\theta^0)-\inf_{\theta\in\Theta}F(\theta).
\]
Letting \(K\to\infty\) gives \eqref{eq:summability_general}. Dividing by \(K\) gives \eqref{eq:min_stationarity_general}, and summability implies \(\|\nabla F(\theta^k)\|_P\to 0\).
\end{proof}

\blue{
Theorem~\ref{thm:pgd_general} shows that the exact multi-step iteration \eqref{eq:pgd_exact} is a descent method for the free-energy objective \(F(\theta)\): for a sufficiently small step size, \(F(\theta^k)\) decreases monotonically and the iterates converge toward stationarity. This result also suggests a natural stopping criterion based on the preconditioned gradient norm \(\|\nabla F(\theta^k)\|_{P}\). See Algorithm~\ref{alg:GD-MPPI} for a summary of the method.}

\section{Optimization over Gaussian Family with Fixed Covariance}
\label{sec:gaussian_fixed_cov}

We now specialize the preceding results to the fixed-covariance Gaussian family
\begin{align}
\label{eq:gaussian:family}
\pi_\mu(u)=\mathcal{N}(u;\mu,\Sigma),
\qquad
\Sigma\succ 0,
\end{align}
where the mean \(\mu\in\mathbb{R}^m\) is the optimization variable. In this case, the score and log-Hessian are given by
\[
\nabla_\mu \log \pi_\mu(u)=\Sigma^{-1}(u-\mu),
\qquad
\nabla_\mu^2 \log \pi_\mu(u)=-\Sigma^{-1}.
\]
Substituting these expressions into Lemma~\ref{lem:gradient-representations} yields
\begin{align}
\nabla F(\mu)
&=
-\tau \Sigma^{-1}\big(\E_{\rho_\mu}[u]-\mu\big), \  \rho_{\mu}(u)=T(\pi_{\mu})(u).
\label{eq:grad_gaussian_conv}
\end{align}
Using \eqref{eq:grad_gaussian_conv}, the exact preconditioned gradient step \eqref{eq:pgd_exact} becomes
\begin{align}
\label{eq:gaussian_pgd_exact}
\mu_{k+1}
=
\mu_k
+\eta \tau P \Sigma^{-1}
\bigl(\E_{\rho_{\mu_k}}[u]-\mu_k\bigr).
\end{align}
Using the ratio-of-expectations representation in \eqref{eq:grad_F_theta_2}, the expectation $\mathbb{E}_{\rho_{\mu_k}}[u]$ can be approximated by self-normalized importance sampling. Accordingly, if \(u^{(j)} \sim \mathcal{N}(\mu_k,\Sigma)\) and the normalized weights \(\bar w_j\) are defined as in \eqref{eq:pgd_weights}, a Monte Carlo implementation of \eqref{eq:gaussian_pgd_exact} is
\begin{align}
\label{eq:gaussian_pgd_sampled}
\mu_{k+1}
=
\mu_k
+\eta \tau P \Sigma^{-1}
\left(
\sum_{j=1}^N \bar w_j u^{(j)}-\mu_k
\right).
\end{align}
In particular, by choosing $P=\frac{1}{\tau}\Sigma, \ 
\eta=1$, the exact preconditioned gradient update \eqref{eq:gaussian_pgd_exact} reduces to
\begin{align}
\mu_{k+1}
&=
\E_{\rho_{\mu_k}}[u] =
\frac{\E_{\pi_{\mu_k}}[w(u)\,u]}{\E_{\pi_{\mu_k}}[w(u)]},
\label{eq:mppi_exact_expectation}
\end{align}
where \(w(u)=\exp(-f_0(u)/\tau)\mathbf 1_C(u)\), and the last equality follows from the ratio-of-expectations in Lemma~\ref{lem:gradient-representations}. Correspondingly, the Monte Carlo implementation becomes
\begin{align}
\label{eq:mppi_exact_update}
\mu_{k+1}
=
\sum_{j=1}^N \bar w_j u^{(j)},
\qquad
u^{(j)}\sim\mathcal N(\mu_k,\Sigma),
\end{align}
which is precisely the classical MPPI update.

\subsection{Convergence analysis}
\label{subsec:convergence}
We now analyze the exact Gaussian update \eqref{eq:gaussian_pgd_exact} through the lens of preconditioned gradient descent. Although \eqref{eq:gaussian_pgd_exact} is well defined for any positive definite preconditioner \(P\), the choice
\[
P=\frac{1}{\tau}\Sigma
\]
is especially natural for two reasons. First, when \(\eta=1\), this choice exactly recovers the classical MPPI update, as shown in the previous subsection. Therefore, convergence guarantees established under \(P=\Sigma/\tau\) immediately apply to MPPI, as well as to its relaxed version with arbitrary step size \(\eta>0\). 
Second, this preconditioner is intrinsic to the geometry of the fixed-covariance Gaussian family. Indeed, by Lemma~\ref{lem:gradient-representations},
\[
\nabla^2 F(\mu)
=
\tau \Sigma^{-1}
-
\tau \Sigma^{-1}\mathrm{Cov}_{\rho_\mu}(u)\Sigma^{-1}.
\]
Hence, with \(P=\Sigma/\tau\),
\begin{align}
\label{eq:metric_hessian_sigma}
P^{1/2}\nabla^2 F(\mu)P^{1/2}
&=
\left(\tfrac{\Sigma}{\tau}\right)^{1/2}
\nabla^2 F(\mu)
\left(\tfrac{\Sigma}{\tau}\right)^{1/2} \\
&=
I-\Sigma^{-1/2}\mathrm{Cov}_{\rho_\mu}(u)\Sigma^{-1/2}.
\nonumber
\end{align}
Thus, in the metric induced by \(P=\Sigma/\tau\), the curvature of the reduced objective is determined entirely by the covariance of the tilted distribution \(\rho_\mu\) relative to the sampling covariance \(\Sigma\). In particular, the explicit dependence on the temperature \(\tau\) disappears after preconditioning. This makes \(P=\Sigma/\tau\) the natural scaling for the convergence analysis.

Accordingly, throughout this subsection we specialize \eqref{eq:gaussian_pgd_exact} to the update
\begin{align}
\label{eq:relaxed_mppi_exact}
\mu_{k+1}
&=(1-\eta)\mu_k+\eta\,\mathbb E_{\rho_{\mu_k}}[u],
\end{align}
which we refer to as the \emph{exact relaxed MPPI update}.
To state the convergence result, define
\begin{align}
\label{eq:L_sigma_def}
L_\Sigma
:=
\sup_{\mu\in\Theta}
\left\|
I-\Sigma^{-1/2}\mathrm{Cov}_{\rho_\mu}(u)\Sigma^{-1/2}
\right\|.
\end{align}
By \eqref{eq:metric_hessian_sigma}, \(L_\Sigma\) is the operator-norm bound on the Hessian of \(F\) in the metric induced by \(\Sigma/\tau\), and hence the corresponding smoothness constant in that metric. In the next theorem, we state the convergence result for \eqref{eq:relaxed_mppi_exact}.

\begin{figure*}[t]
    \centering
    \includegraphics[width=0.32\linewidth]{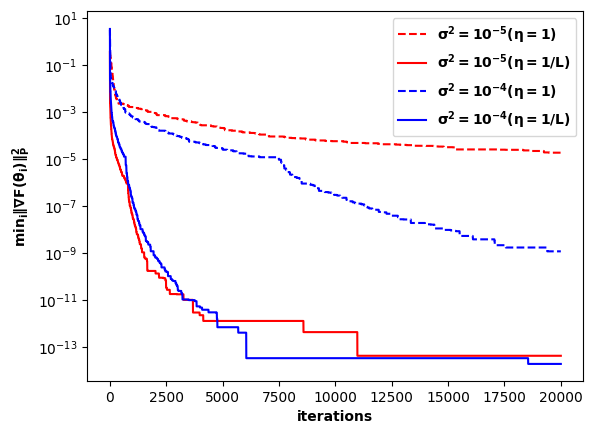}
    \includegraphics[width=0.32\linewidth]{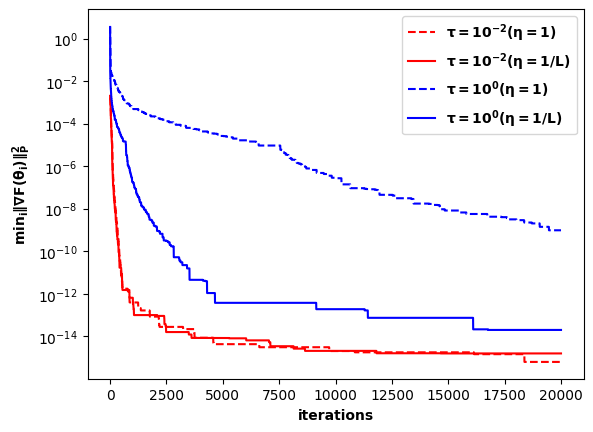}
    \includegraphics[width=0.315\linewidth]{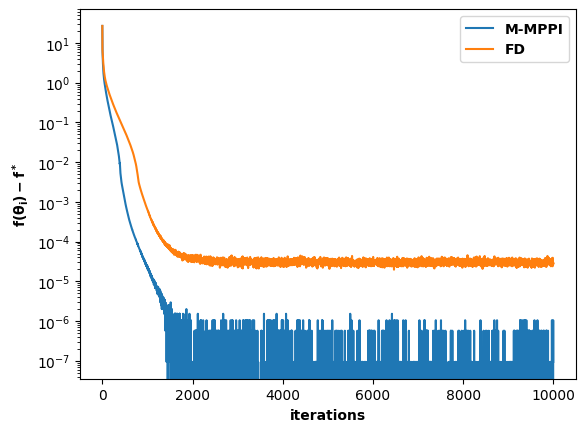}
    \caption{Ablation study of the parameters $\Sigma$ (left) and $\tau$ (middle), and comparison with finite differences (right) on the LQR benchmark.}
    \label{fig:LQR}
\end{figure*}

\begin{theorem}
\label{thm:gaussian_stepsize_bounded}
Consider the fixed-covariance Gaussian family \eqref{eq:gaussian:family}, and the exact preconditioned gradient update \eqref{eq:gaussian_pgd_exact}. 
Assume that the feasible set \(C\subset \mathbb R^m\) is bounded, with diameter \blue{ $D_{\Sigma^{-1}}:=\sup_{u,v\in C}\|\Sigma^{-1/2}(u-v)\|$}. Then the metric smoothness constant \eqref{eq:L_sigma_def} satisfies
\begin{align}
L_\Sigma \le
\max\left\{1,\frac{\blue{D_{\Sigma^{-1}}^2}}{4}-1\right\}.
\label{eq:LP_gaussian_bounded}
\end{align}
Consequently, the exact relaxed MPPI update \eqref{eq:relaxed_mppi_exact} satisfies the descent and convergence conclusions of Theorem~\ref{thm:pgd_general} whenever
\begin{align}\label{eq:convergence_condition}
    0<\eta<\frac{2}{L_\Sigma}.
\end{align}
\end{theorem}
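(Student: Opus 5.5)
The plan is to bound the spectrum of the symmetric matrix $M(\mu):=\Sigma^{-1/2}\mathrm{Cov}_{\rho_\mu}(u)\Sigma^{-1/2}$ uniformly in $\mu$. Once this is done,
\[
\|I-M(\mu)\|=\max\{|1-\lambda_{\min}(M)|,\,|1-\lambda_{\max}(M)|\},
\]
and the bound \eqref{eq:LP_gaussian_bounded} follows.

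Since $M(\mu)$ is a covariance matrix, $M(\mu)\succeq 0$ and $\lambda_{\min}(M)\ge 0$, which gives $1-\lambda_{\min}(M)\le 1$. At the other extreme, $\lambda_{\max}(M)-1\le \lambda_{\max}(M)-1$, so it suffices to show $\lambda_{\max}(M)\le D_{\Sigma^{-1}}^2/4$.

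For the upper bound, change variables to $z=\Sigma^{-1/2}u$. Then $M(\mu)=\mathrm{Cov}_{\tilde\rho_\mu}(z)$, where $\tilde\rho_\mu$ is the pushforward of $\rho_\mu$. Because $\rho_\mu$ is supported in $C$ by \eqref{eq:rho_star}, $\tilde\rho_\mu$ is supported in $\Sigma^{-1/2}C$, a set of Euclidean diameter $D_{\Sigma^{-1}}$. For any unit vector $v$, the scalar $v^\top z$ takes values in an interval of length at most $D_{\Sigma^{-1}}$, because $|v^\top(z-z')|\le\|z-z'\|\le D_{\Sigma^{-1}}$. Popoviciu's inequality (variance of a variable confined to an interval of length $\ell$ is at most $\ell^2/4$) then gives $v^\top M(\mu) v=\mathrm{Var}(v^\top z)\le D_{\Sigma^{-1}}^2/4$. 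Taking the supremum over $v$ and $\mu$ yields $\lambda_{\max}(M)\le D_{\Sigma^{-1}}^2/4$.

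Combining the two bounds, every eigenvalue of $I-M(\mu)$ lies in $[1-D_{\Sigma^{-1}}^2/4,\,1]$. Its absolute value is therefore at most $\max\{1,\,D_{\Sigma^{-1}}^2/4-1\}$, which proves \eqref{eq:LP_gaussian_bounded}. The delicate step is this one-directional spectral estimate: the lower side of the spectrum of $I-M$ is controlled by $D$, while the upper side is controlled trivially by positive semidefiniteness. A naive bound $\|I\|+\|M\|$ would give the weaker constant $1+D^2/4$.

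For the consequence, I would invoke \eqref{eq:metric_hessian_sigma} with $P=\Sigma/\tau$. This shows Assumption~\ref{ass:hessian_bound} holds with $L_P=L_\Sigma$, which is finite by the bound just proved. Assumption~\ref{ass:regularity_parametric} holds for the Gaussian family because $C$ is compact, $f_0$ is continuous, and Gaussian densities and their $\mu$-derivatives are dominated on compact sets, so differentiation under the integral is justified. I would also note that with $P=\Sigma/\tau$ the update \eqref{eq:gaussian_pgd_exact} coincides with \eqref{eq:relaxed_mppi_exact}. Theorem~\ref{thm:pgd_general} then applies directly for $0<\eta<2/L_\Sigma$; here $\Theta=\mathbb R^m$, and $F$ is bounded below, since $Z(\mu)\le \sup_C e^{-f_0/\tau}$ while the density is bounded.

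The main obstacle I expect is verifying the regularity hypotheses cleanly. The spectral bound itself is elementary.
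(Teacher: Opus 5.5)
Your proof is correct and follows the paper's argument. Both apply Popoviciu's inequality to the scalar projection $v^\top\Sigma^{-1/2}U$, $U\sim\rho_\mu$, to get $\Sigma^{-1/2}\mathrm{Cov}_{\rho_\mu}(u)\Sigma^{-1/2}\preceq \tfrac{D_{\Sigma^{-1}}^2}{4}I$, then read off the bound on $\|I-M(\mu)\|$. Beyond the paper, you make explicit that $M(\mu)\succeq 0$ and check the hypotheses needed to invoke Theorem~\ref{thm:pgd_general}: differentiation under the integral and $\inf F\ge\min_C f_0>-\infty$.
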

\begin{proof}
    See Appendix \ref{appendix:proof}.
\end{proof}
\subsubsection*{Implication for MPPI with unit step size}
\label{rem:mppi_unit_stepsize}
The exact MPPI iteration is recovered by setting \(\eta=1\) in \eqref{eq:relaxed_mppi_exact}. Hence, convergence of the exact MPPI iteration follows from Theorem~\ref{thm:gaussian_stepsize_bounded} whenever the unit step size satisfies the admissibility condition for $0<1<\frac{2}{L_\Sigma}$, which is equivalent to $L_\Sigma<2$.
Using the bound in \eqref{eq:LP_gaussian_bounded}, a sufficient condition is therefore $D^2_{\Sigma^{-1}} < 12$. 
\blue{Since $D^2_{\Sigma^{-1}} \le
\frac{D^2}{\lambda_{\min}(\Sigma)}$, where $D$ is the Euclidean diameter of $C$, this condition is guaranteed when
\(
\lambda_{\min}(\Sigma) \geq \frac{D^2}{12}.
\)
}

Thus, if the covariance matrix is sufficiently large, then the exact MPPI iteration with \(\eta=1\) satisfies the descent and convergence guarantees of Theorem~\ref{thm:pgd_general}. In particular, this gives a simple design rule: the exploration covariance must not be too small relative to the diameter of the feasible set. Equivalently, overly concentrated sampling distributions can destroy the global descent guarantee, whereas sufficiently diffuse sampling is enough to ensure it.

\blue{
\begin{remark}
Theorems~\ref{thm:pgd_general}--\ref{thm:gaussian_stepsize_bounded} analyze the exact expectation-based iteration. The sampled update
\eqref{eq:pgd_sampled} instead uses a self-normalized importance-sampling
estimator, which is generally biased. To see how this affects the descent guarantee, write
\[
\widehat{\nabla F}(\theta_k)
=
\nabla F(\theta_k)+b_k+\xi_k,
\]
where
$
b_k
:=
\mathbb E[\widehat{\nabla F}(\theta_k)\mid\theta_k]
-
\nabla F(\theta_k)
$
is the bias of the self-normalized estimator, and
$\mathbb E[\xi_k\mid\theta_k]=0$ captures its zero-mean random fluctuation. 
Applying the same smoothness argument as in Theorem~\ref{thm:pgd_general},
\[
\begin{aligned}
\mathbb E[F(\theta_{k+1})\mid\theta_k]
&\le
F(\theta_k)
-
\eta\left(\frac34-L_P\eta\right)
\|\nabla F(\theta_k)\|_P^2 \\
&+
\eta(1+L_P\eta)\|b_k\|_P^2
+
\frac{L_P\eta^2}{2}
\mathbb E[\|\xi_k\|_P^2\mid\theta_k].
\end{aligned}
\]
Thus, the exact descent guarantee is preserved up to two terms controlled by the finite-sample gradient estimation error. A complete non-asymptotic analysis of the bias and variance of the self-normalized estimator is an
important direction for future work.
\end{remark}
}

\section{Numerical Analysis}
\subsection{Linear Quadratic Regulator (LQR)}
We consider a finite-horizon LQR trajectory optimization problem with double-integrator dynamics
\begin{align}
x_{t+1} = \begin{bmatrix}
1 & 1 \\
0 & 1
\end{bmatrix} x_t + \begin{bmatrix}
0.5 \\
1
\end{bmatrix} u_t. \notag
\end{align}
\blue{We define the cost to be $J(u) = \sum_{t = 1}^T \|x_t\|^2 + \|u_t\|^2,$
where horizon $T=10$, $u :=  (u_0, \cdots, u_{T-1}) \in \mathbb{R}^{10}$ is the stacked control vector, $x_0 = (2.5, 0)$.
}
Rolling out the trajectories yields the quadratic program
\begin{align}
    \min_{u \in \mathcal{C}} \tfrac{1}{2} u^\top Q u + c^\top u, \notag
\end{align}
\blue{where \(Q \succeq 0\), and \(c\) can be computed accordingly. The constraint set \(\mathcal{C}\) enforces both the control bounds \(|u| \leq 1\) and the state constraints \(x \in [-5,5] \times [-1,1]\). 
A detailed derivation of the resulting QP is provided in the Appendix \ref{appendix:implement}.}
We set a budget of $N=1000$ samples per iteration.
\Cref{fig:LQR} shows the results for different choices of the parameters.
\blue{
\Cref{fig:LQR} illustrates the convergence behavior predicted by \eqref{eq:convergence_condition}. 
%
In the first two figures, we fix one parameter among $\tau$ and $\Sigma$, and compare the choices of MPPI $\eta = 1$ (dashed lines), and suggested by our theories ($\eta = 1/L_\Sigma$) (solid lines). 
In the left plot, we fix $\tau = 1$ and compare two choices of $\Sigma = \sigma^2 I$, and in the middle plot, we fix $\Sigma=10^{-4}I$, and compare two choices of $\tau$.
Since the LQR objective is quadratic, $L_\Sigma$ can be computed explicitly. 
When the Lipschitz constant is small (e.g., $L_\Sigma = 0.1$), the choice $\eta = 1$ becomes conservative, and a larger step size leads to a faster convergence rate.
We also compare Multi-step MPPI (M-MPPI) with finite differences (FD) in the right figure, where our method outperforms FD. More details regarding the setup can be found in Appendix \ref{appendix:implement}.
}

\subsection{Dubins Car}
We then consider a trajectory optimization task in a cluttered environment, where a Dubins car must reach a given destination.
At each time, the optimization problem is formulated as
\begin{align}
    \min_{u \in \mathcal{C}} \sum_{t=1}^T 
    \|x_t - x_d\|_Q^2 + \|u_t\|_R^2, \notag
\end{align}
where $T=20$, $Q = \mathrm{diag}(1, 1, 0.01)$, $R=0.001$, $x_0 = (0, 0, \pi/2)$, and $x_d = (6, 6, 0)$.
The system dynamics $x_t = [p^x_{t}, p^y_{t},
        \mathtt{\theta}_{t}]^\top$ are
\begin{align*}
    x_{t+1} =  x_{t} + [v \cos(\mathtt{\theta}_t), v \sin(\mathtt{\theta}_t),w_t]^\top \Delta t,
\end{align*}
where $v = 4$ is the constant velocity and the control $w_t \in [-\frac{3}{2}\pi, \frac{3}{2}\pi]$, and we set $N=1024$.
\Cref{fig:dubins} shows the trajectory chosen by the algorithm with $K=1$ (MPPI) and $K=10$. Since MPPI does not iterate until convergence, it selects a suboptimal path. More details on this setup and comparison with Log-MPPI \cite{mohamed2022autonomous} are in \Cref{tab:dubins}, where we show that increasing $K$ improves the average cost, at the expense of runtime. The reported results are averaged over 3 seeds.

\begin{figure}[t]
    \centering
    \includegraphics[width=0.49\linewidth]{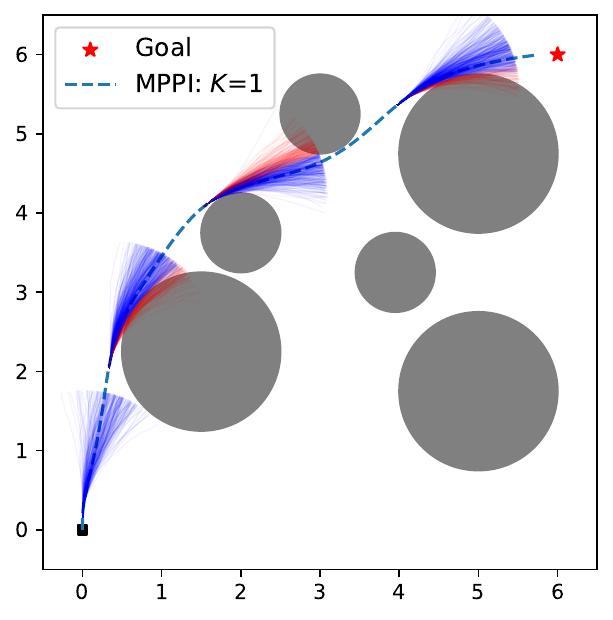}
    \includegraphics[width=0.49\linewidth]{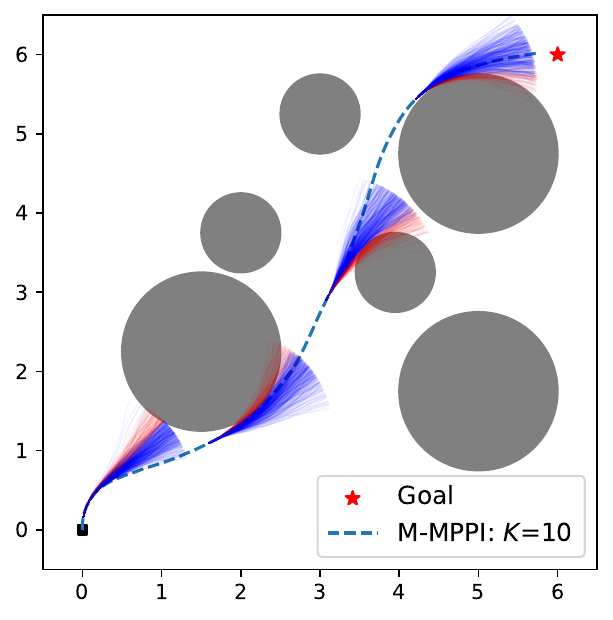}
    \caption{Comparison of the trajectories chosen by MPPI and 10-step MPPI on the Dubins car benchmark in a cluttered environment. \blue{The blue and red lines denote the safe and unsafe trajectories, respectively.}}
    \label{fig:dubins}
\end{figure}

\begin{table}[t]
\centering
\begin{tabular}{l c c c}
\hline
 & Runtime & Sample acceptance & Average  \\
 & (s) & rate \% & cost\\
\hline
MPPI ($K=1$) & \textbf{16.8} & 0.75 & 26.14\\
Log-MPPI & 17.1 & 0.75 & 24.3\\
M-MPPI ($K=5$) & 34.4 & 0.79 & 23.98\\
M-MPPI ($K=10$) & 47.8 & \textbf{0.80} & \textbf{23.85}\\
\hline
\end{tabular}
\caption{Comparison of runtime, sample acceptance rate $\%$, and average cost of the chosen trajectory for various methods. }
\label{tab:dubins}
\end{table}

\section{Conclusion}
In this paper, we showed that MPPI admits a direct variational and optimization-theoretic interpretation. By lifting constrained trajectory optimization to a KL-regularized problem over distributions, we obtained a free-energy objective whose optimization over a parametric sampling family yields a preconditioned gradient method. In the Gaussian fixed-covariance setting, this recovers classical MPPI exactly and leads to explicit descent and stationarity guarantees, as well as a simple covariance-dependent design rule for unit-step MPPI. These results help demystify MPPI from an optimization viewpoint and open the door to principled extensions of sampling-based control methods.
\blue{Our analysis focuses on the exact expectation-based iteration; understanding the full finite-sample and receding-horizon closed-loop behavior remains an important direction for future work.}

\printbibliography
\clearpage
\appendix

\subsection{Derivation of the Truncated Gibbs Distribution}
\label{appendix:gibbs_derivation}
Define
\begin{align}
\label{eq:rho_star_appendix}
\rho^\star_\pi(u)
:=
\frac{
\pi(u)\exp\!\big(-f_0(u)/\tau\big)\mathbf 1_C(u)
}{
Z(\pi)
}.
\end{align}
We show that \(\rho^\star_\pi\) is the unique minimizer of
\eqref{eq:dist_primal}. For any feasible distribution \(\rho\),
\[
\log \rho^\star_\pi(u)
=
\log \pi(u)
-
\frac{f_0(u)}{\tau}
-
\log Z(\pi),
\qquad u\in C.
\]
Hence,
\begin{align}
\mathrm{KL}(\rho\|\rho^\star_\pi)
&=
\int_C
\rho(u)
\log\frac{\rho(u)}{\rho^\star_\pi(u)}
\,du \notag\\
&=
\int_C
\rho(u)
\left[
\log\frac{\rho(u)}{\pi(u)}
+
\frac{f_0(u)}{\tau}
+
\log Z(\pi)
\right]du \notag\\
&=
\mathrm{KL}(\rho\|\pi)
+
\frac{1}{\tau}\mathbb E_{\rho}[f_0(u)]
+
\log Z(\pi).
\end{align}
Multiplying by \(\tau\) and rearranging gives
\begin{align}
\mathbb E_{\rho}[f_0(u)]
+
\tau\,\mathrm{KL}(\rho\|\pi)
=
-\tau\log Z(\pi)
+
\tau\,\mathrm{KL}(\rho\|\rho^\star_\pi).
\label{eq:variational_identity}
\end{align}
Since \(\mathrm{KL}(\rho\|\rho^\star_\pi)\ge 0\), with equality if and only if
\(\rho=\rho^\star_\pi\) almost everywhere, the minimum value of
\eqref{eq:dist_primal} is
\[
-\tau\log Z(\pi),
\]
and the unique minimizer is
\[
\rho^\star_\pi(u)
=
\frac{
\pi(u)\exp\!\big(-f_0(u)/\tau\big)\mathbf 1_C(u)
}{
Z(\pi)
}.
\]
This proves \eqref{eq:rho_star}.

\subsection{Experiment Setup and Details.}\label{appendix:implement}
\subsubsection{Implementation Details}
We implemented all experiments in Python using JAX~\cite{jax2018github} for batched trajectory rollout and vectorized cost evaluation. For the LQR problem, we use initial state $(x_0=(2.5,0))$, target state $(x^\star=(0,0))$. 
For GD-MPPI, we used 1000 antithetic Gaussian samples per iteration, initialized the control mean to zero, used diagonal variance (across horizon), and ran 20,000 inner iterations. Feasible samples were selected by rejection using the state constraints. As a baseline, we implemented a finite-difference method with perturbation standard deviation $(10^{-3})$, step size $(10^{-3})$, and projection of the updated control sequence back onto the feasible LQR constraint set.
The optimal reference value $(f(x^\star))$ was computed using CVXPY, and convergence was reported as the optimality gap $(f(x)-f(x^\star))$.


\subsubsection{LQR Details}
In this section, we present the derivation of the LQR problem as a QP, similar to the formulation discussed in \cite{yi2024covo}.
We consider a finite-horizon LQR problem with both control and state constraints. The system dynamics are
  \begin{equation}
      x_{t+1} = A x_t + B u_t, \qquad t = 0,\ldots,H-1, \notag
  \end{equation}
  where $x_t \in \mathbb{R}^n$ and $u_t \in \mathbb{R}^m$. 
  The constrained LQR problem is
  \begin{equation}
      \min_{\{u_t\}_{t=0}^{H-1}}
      \frac{1}{2}\sum_{t=0}^{H-1} x_t^\top Q x_t
      +
      \frac{1}{2}\sum_{t=0}^{H-1} u_t^\top R u_t, \notag
  \end{equation}
  subject to
  \begin{align}
      x_{t+1} &= A x_t + B u_t, \notag \\
      u_{\min} &\leq u_t \leq u_{\max}, \notag\\
      x_{\min} &\leq x_t \leq x_{\max}, \notag
  \end{align}
  for $t = 0,\ldots,H-1$, with state constraints applied over the relevant
  trajectory indices.
  To convert this problem into a quadratic program over the control sequence, we
  stack the controls and states as
  \begin{align}
      u =
      \begin{bmatrix}
          u_0^\top & u_1^\top & \cdots & u_{H-1}^\top
      \end{bmatrix}^\top, \notag \\
      x =
      \begin{bmatrix}
          x_0^\top & x_1^\top & \cdots & x_{H-1}^\top
      \end{bmatrix}^\top. \notag
  \end{align}
  Using the linear dynamics, the stacked trajectory can be written as
  \begin{equation}
      x = M u + b,\notag
  \end{equation}
  where $M$ captures the effect of the control sequence on the state trajectory and
  $b$ is the free response from the initial state. For each time step,
  \begin{equation}
      x_t = A^t x_0 + \sum_{j=0}^{t-1} A^{t-j-1} B u_j, \notag
  \end{equation}
  and let
  \begin{equation}
      \bar{Q} = \mathrm{blkdiag}(Q,\ldots,Q),
      \qquad
      \bar{R} = \mathrm{blkdiag}(R,\ldots,R).\notag
  \end{equation}
  Substituting $x = Mu+b$ into the LQR cost gives
  \begin{equation}
      J(u)
      =
      \frac{1}{2}(Mu+b)^\top \bar{Q}(Mu+b)
      +
      \frac{1}{2}u^\top \bar{R}u . \notag
  \end{equation}
  After expanding and removing the constant term independent of $u$, the objective
  becomes
  \begin{equation}
      f(u)
      =
      \frac{1}{2}u^\top Q_{\mathrm{qp}}u + c^\top u, \notag
  \end{equation}
  where
  \begin{equation}
      Q_{\mathrm{qp}} = M^\top \bar{Q}M + \bar{R},
      \qquad
      c = M^\top \bar{Q}b. \notag
  \end{equation}

  The control box constraints remain
  \begin{equation}
      u_{\min} \leq u \leq u_{\max}. \notag
  \end{equation}
  The state constraints are converted into linear constraints on the control
  sequence using $x = Mu+b$:
  \begin{equation}
      x_{\min} \leq Mu+b \leq x_{\max}. \notag
  \end{equation}
  Equivalently,
  \begin{equation}
      x_{\min} - b \leq Mu \leq x_{\max} - b. \notag
  \end{equation}

  Thus, the LQR problem with control and state constraints is written as the
  convex quadratic program
  \begin{equation}
      \begin{aligned}
      \min_{u} \quad
          & \frac{1}{2}u^\top Q_{\mathrm{qp}}u + c^\top u \\
      \mathrm{s.t.} \quad
          & u_{\min} \leq u \leq u_{\max}, \\
          & x_{\min} - b \leq Mu \leq x_{\max} - b .
      \end{aligned}
  \end{equation}

\subsubsection{Derivation of the $L_\Sigma$ for LQR}
In the case of LQR, the objective \(f_0\) is quadratic. Since \(\pi_\theta\) is also Gaussian, it follows that
\[
\rho_\theta \propto \pi_\theta \exp(-f_0 / \tau)
\]
is Gaussian as well. This observation enables us to compute the covariance term needed to evaluate \(\nabla^2 F\), and consequently its norm, i.e., \(L_\Sigma\). In particular, we have
\[
\mathrm{Cov}_{\rho_\theta}(u)
=
\left(\Sigma^{-1} + Q/\tau\right)^{-1}.
\]
Note that the $\mathrm{Cov}$ is independent of $\theta$.
For the scalar variance case \(\Sigma = \sigma^2 I\), according to \eqref{eq:L_sigma_def}, this further simplifies to
\begin{align}
    L_\Sigma
    =
    \left\|
    I - \frac{1}{\sigma^2}
    \left(I/\sigma^2 + Q/\tau\right)^{-1}
    \right\|.
\end{align}

Since the matrix inside the norm is positive semidefinite, \(L_\Sigma\) is equal to its largest eigenvalue. Therefore,
\[
L_\Sigma
=
1 - \frac{\tau}{\tau + \sigma^2 \lambda_{\max}(Q)}.
\]
This shows that increasing \(\sigma^2\) also increases \(L_\Sigma\), and vice versa, and that $\tau$ has the reversed effect on the Lipschitz constant.

\subsection{Proofs}
\label{appendix:proof}

\textbf{Proof of \Cref{lem:gradient-representations}:}
Differentiating $Z(\theta) = \mathbb{E}_{\pi_\theta}[e^{-f_0(u)/\tau}]$ with respect to $\theta$, using the identity
$\nabla_\theta \pi_\theta(u)
=
\pi_\theta(u)\nabla_\theta \log \pi_\theta(u)$, and dividing both sides by $Z(\theta)$ yields
\begin{align}
\nabla_\theta \log Z(\theta)
&=
\int_C \nabla_\theta \log \pi_\theta(u)\,
\frac{\pi_\theta(u)e^{-f_0(u)/\tau}}{Z(\theta)}\,du
\nonumber\\
&=
\mathbb{E}_{\rho_\theta}\!\left[\nabla_\theta \log \pi_\theta(u)\right]. \notag
\end{align}
 Since $F(\theta)=-\tau \log Z(\theta)$, we obtain \eqref{eq:grad_F_theta} and get \eqref{eq:grad_F_theta_2} using \eqref{eq:weight_sample}. 
%
Next, we compute the Hessian of $F(\theta)$. Differentiating 
\eqref{eq:grad_F_theta_2} with respect to $\theta$ gives
\begin{align}
\nabla_\theta^2 F(\theta)
&=
-\tau \nabla_\theta  \frac{\mathbb{E}_{\pi_\theta}\!\left[w(u)\nabla_\theta \log \pi_\theta(u)\right]}
{\mathbb{E}_{\pi_\theta}\!\left[w(u)\right]}.
\label{eq:hessian_step1}
\end{align}
Since $\rho_\theta(u)$ depends on $\theta$
through $\pi_\theta(u)$ and $Z(\theta)$, we differentiate the expectation
using the quotient rule. Define
$
A(\theta)=\mathbb{E}_{\pi_\theta}\!\left[w(u)\nabla_\theta \log \pi_\theta(u)\right],
\ 
B(\theta)=\mathbb{E}_{\pi_\theta}[w(u)].
$
From \eqref{eq:grad_F_theta_2} we have $\nabla_\theta F(\theta)=-\tau \frac{A(\theta)}{B(\theta)}$.
Differentiating this expression yields
\begin{align}
\nabla_\theta^2 F(\theta)
&=
-\tau
\left(
\frac{\nabla_\theta A(\theta)}{B(\theta)}
-
\frac{A(\theta)\nabla_\theta B(\theta)^\top}{B(\theta)^2}
\right).
\label{eq:hessian_step2}
\end{align}
We first compute $\nabla_\theta B(\theta)$. Using the score identity,
\begin{align*}
\nabla_\theta B(\theta) \! = \!
\nabla_\theta \mathbb{E}_{\pi_\theta}[w(u)] 
\! = \!
\mathbb{E}_{\pi_\theta}\!\left[w(u)\nabla_\theta \log \pi_\theta(u)\right]
=
A(\theta).
\end{align*}
Next, differentiating $A(\theta)$ and again using differentiation of
expectations under $\pi_\theta$ gives
\begin{align}
\nabla_\theta A(\theta)
\! = \!
\mathbb{E}\!\left[
w(u)\Big(
\nabla_\theta^2 \log \pi_\theta(u)
\! + \!
\nabla_\theta \log \pi_\theta(u)\nabla_\theta \log \pi_\theta(u)^\top
\Big)
\right]. \notag
\end{align} 
Substituting these expressions into \eqref{eq:hessian_step2} and writing
the result in terms of the distribution $\rho_\theta$ yields \eqref{eq:hessian_general}.

\textbf{Proof of Lemma \ref{lemma:LP_smooth_from_hessian}:}
By Taylor's theorem with integral remainder,
\begin{align}
F(\theta+\Delta\theta)
&= F(\theta)+ \nabla F(\theta)^\top \Delta\theta \notag \\
&+ \int_0^1 (1-t)\,
\underbrace{\Delta\theta^\top \nabla^2 F(\theta+t\Delta\theta)\Delta\theta}_{M}\,dt . \notag
\end{align}
For any $t\in[0,1]$ and $P \succ 0$,
\begin{align}
M &= \Delta\theta^\top P^{-1/2} P^{1/2} \nabla^2 F(\theta+t\Delta\theta) P^{1/2} P^{-1/2}\Delta\theta \notag \\
&\le
\|P^{1/2}\nabla^2 F(\theta+t\Delta\theta)P^{1/2}\|
\,\Delta\theta^\top P^{-1}\Delta\theta \notag \\
&\le
L_P\,\Delta\theta^\top P^{-1}\Delta\theta .\notag
\end{align}
Substituting this bound and using $\int_0^1(1-t)dt=\tfrac12$ yields \eqref{eq:LP_smooth}.

\textbf{Proof of \Cref{thm:gaussian_stepsize_bounded}:}
Define the 1D random variable $Y=x^\top \Sigma^{-1/2} U$, where $U \sim \rho_{\mu}$ and $x \in \mathbb{R}^{n}$ is a unit vector. Since \(\rho_\mu\) is supported on \(C\), $Y$ has a support interval of length at most $D_{\Sigma^{-1}}$. Indeed, 
\begin{align}
\sup_{u,v\in C} |x^\top \Sigma^{-1/2} u - x^\top \Sigma^{-1/2} v|
&\leq
\sup_{u,v\in C} \|x\|\,\|\Sigma^{-1/2}(u-v)\| \notag \\
&\le D_{\Sigma^{-1}}, \notag
\end{align}
since \(\|x\|=1\). Now, among all scalar distributions supported on an interval of length \(D_{\Sigma^{-1}}\), the largest possible variance is \(D_{\Sigma^{-1}}^2/4\), attained by a Bernoulli distribution placing equal mass at the two endpoints. Therefore, $\operatorname{Var}_{\rho_\mu}(Y)\le \frac{D_{\Sigma^{-1}}^2}{4}$. 
Since
\[
x^\top \Sigma^{-1/2} \operatorname{Cov}_{\rho_\mu}(U) \Sigma^{-1/2} x \!
= \!
\operatorname{Var}_{\rho_\mu}(x^\top \Sigma^{-1/2} U) \!
= \!
\operatorname{Var}_{\rho_\mu}(Y),
\]
it follows that $x^\top  \Sigma^{-1/2} \operatorname{Cov}_{\rho_\mu}(U) \Sigma^{-1/2} x \le \frac{D_{\Sigma^{-1}}^2}{4}$ for every unit vector $x$. Hence, $ \Sigma^{-1/2} \operatorname{Cov}_{\rho_\mu}(U) \Sigma^{-1/2}\preceq \frac{D_{\Sigma^{-1}}^2}{4}I$. 
Then, we obtain
\[
\left\|
I-\Sigma^{-1/2}\mathrm{Cov}_{\rho_\mu}(u)\Sigma^{-1/2}
\right\|
\le
\max \{1,\frac{D_{\Sigma^{-1}}^2}{4}-1\}.
\]
Taking the supremum over \(\mu\) proves \eqref{eq:LP_gaussian_bounded}. The step-size condition then follows directly from Theorem~\ref{thm:pgd_general}.

\end{document}